\newtheorem{theorem}{Theorem}[section]
\newtheorem{corollary}[theorem]{Corollary}
 \numberwithin{equation}{section}
\newcommand{\hz}{\widehat{0}}
\newcommand{\ho}{\widehat{1}}
\newcommand{\av}{{\bf a}}
\newcommand{\bv}{{\bf b}}
\newcommand{\cv}{{\bf c}}
\newcommand{\dv}{{\bf d}}
\newcommand{\ab}{\av\bv}
\newcommand{\cd}{\cv\dv}
\newcommand{\zab}{{\mathbb Z}\langle\av,\bv\rangle}
\newcommand{\rv}{{\bf r}}
\newcommand{\tensor}{\otimes}
\newcommand{\coveredby}{\prec}
\DeclareMathOperator{\Pyr}{Pyr}
\DeclareMathOperator{\Bipyr}{Bipyr}
\DeclareMathOperator{\des}{des}
\DeclareMathOperator{\maj}{maj}
\DeclareMathOperator{\Des}{Des}
\DeclareMathOperator{\ps}{ps}
\DeclareMathOperator{\Comp}{Comp}
\DeclareMathOperator{\co}{co}
\DeclareMathOperator{\QSym}{QSym}
\newcommand{\JH}{JH}
\newcommand{\Zzz}{{\mathbb Z}}
\newcommand{\SSSS}{\mathfrak{S}}
\newcommand{\TTTT}[1]{B_{#1} \cup \{\ho\}}
\newcommand{\Gaussian}[2]{\genfrac{[}{]}{0pt}{}{#1}{#2}}
\begin{document}

\title{A Poset View of the Major Index}

\author{Richard Ehrenborg\thanks{Corresponding author:
Department of Mathematics,
University of Kentucky,
Lexington, KY 40506-0027,
USA,
{\tt jrge@ms.uky.edu},
phone +1 (859) 257-4090,
fax +1 (859)  257-4078.}
and Margaret Readdy\thanks{Department of Mathematics,
University of Kentucky,
Lexington, KY 40506-0027,
USA,
{\tt readdy@ms.uky.edu}.}
}

\date{}

\maketitle

\begin{abstract}
We introduce the Major MacMahon map from
$\zab$ to $\Zzz[q]$,
and
show how this map interacts
with the pyramid and bipyramid operators.
When the Major MacMahon map is applied
to the $\ab$-index of
a simplicial poset, it yields the $q$-analogue of
$n!$ times the $h$-polynomial of the poset.
Applying the map to the Boolean algebra gives
the distribution of the major index on the symmetric group,
a seminal result due to MacMahon.
Similarly, when applied to the cross-polytope we obtain
the distribution of one of the major indexes on
signed permutations due to Reiner.

\vspace*{2 mm}

\noindent
{\em 2010 Mathematics Subject Classification.}
Primary
06A07; 
Secondary 
05A05, 
52B05. 

\vspace*{2 mm}

\noindent
{\em Key words and phrases.}
The major index;
permutations and signed permutations;
the Boolean algebra and the face lattice of a cross-polytope;
simplicial posets;
and
principal specialization.
\end{abstract}

\section{Introduction}

One hundred and one years ago in 1913
Major Percy Alexander MacMahon~\cite{MacMahon_1}
(see also his collected works~\cite{MacMahon_collected})
introduced the major index of
a permutation $\pi = \pi_{1} \pi_{2} \cdots \pi_{n}$
of the multiset
$M = \{1^{\alpha_{1}}, 2^{\alpha_{2}}, \ldots, k^{\alpha_{k}}\}$
of size $n$
to be the sum of the elements
of its descent set, that is,
$$   \maj(\pi) = \sum_{\pi_{i} > \pi_{i+1}} i   .  $$
He showed that the distribution of this permutation statistic 
is given by the $q$-analogue of the multinomial 
Gaussian coefficient, that is,
the following identity holds:
\begin{equation}
    \sum_{\pi} q^{\maj(\pi)}
    =
      \frac{[n]!}{[\alpha_{1}]! \cdot [\alpha_{2}]! \cdots [\alpha_{k}]!}
    =
       \Gaussian{n}{\alpha}  ,
\label{equation_n!}
\end{equation}
where $\pi$ ranges over all permutations of the multiset $M$
and $\alpha$ is the composition
$(\alpha_{1}, \alpha_{2}, \ldots, \alpha_{k})$.
Here $[n]! = [n] \cdot [n-1] \cdots [1]$
denotes the $q$-analogue of $n!$,
where $[n] = 1 + q + \cdots + q^{n-1}$.

Many properties of the {\em descent set} of a permutation
$\pi$, that is, $\Des(\pi) = \{i \: : \: \pi_{i} > \pi_{i+1}\}$,
have been studied by encoding the set by its $\ab$-word;
see for instance~\cite{Ehrenborg_Readdy_r_cubical,Readdy}.
For a multiset permutation $\pi \in \SSSS_{M}$
the {\em $\ab$-word} is given by $u(\pi) = u_{1} u_{2} \cdots u_{n-1}$,
where 
$u_{i} = \bv$ if $\pi_{i} > \pi_{i+1}$ 
and
$u_{i} = \av$ otherwise.

Inspired by this definition, we introduce the
{\em Major MacMahon map} $\Theta$ on 
the ring $\zab$ of non-commutative polynomials in the variables
$\av$ and $\bv$ to the ring $\Zzz[q]$ of polynomials in the variable~$q$,
by
$$ \Theta(w) = \prod_{i \: : \: u_{i} = \bv} q^{i}  , $$
for a monomial $w = u_{1} u_{2} \cdots u_{n}$
and extend $\Theta$ to all of $\zab$ by linearity.
In short, the map~$\Theta$ sends each variable $\av$ to
$1$ and the variables $\bv$ to $q$ to the power of its position,
read from left to right.
A Swedish example is $\Theta(\av\bv\bv\av) = q^{5}$.

\section{Chain enumeration and products of posets}

Let $P$ be a graded poset of rank $n+1$
with minimal element $\hz$,
maximal element $\ho$
and
rank function~$\rho$.
Let the rank difference be defined by
$\rho(x,y) = \rho(y) - \rho(x)$.
The {\em flag $f$-vector} entry $f_{S}$,
for $S = \{s_{1} < s_{2} < \cdots < s_{k}\}$ a subset $\{1,2, \ldots, n\}$,
is the number of chains
$c = \{\hz = x_{0} < x_{1} < x_{2} < \cdots < x_{k+1} = \ho\}$
such that
the rank of the element $x_{i}$ is $s_{i}$,
that is, $\rho(x_{i}) = s_{i}$ for $1 \leq i \leq k$.
The {\em flag $h$-vector} is defined by the invertible relation
$$   h_{S} 
    =
       \sum_{T \subseteq S} (-1)^{|S-T|} \cdot f_{T}  .  $$
For a subset $S$ of $\{1,2, \ldots, n\}$ define two
$\ab$-polynomials of degree $n$ 
by $u_{S} = u_{1} u_{2} \cdots u_{n}$
and $v_{S} = v_{1} v_{2} \cdots v_{n}$ by
$$
   u_{i} = \begin{cases} \av & \text{ if $i \notin S$,} \\
                                     \bv & \text{ if $i \in S$,}
                  \end{cases}
\:\:\:\: \text{ and } \:\:\:\:
   v_{i} = \begin{cases} \av-\bv & \text{ if $i \notin S$,} \\
                                     \bv & \text{ if $i \in S$.}
                  \end{cases}
$$
The {\em $\ab$-index} of the poset $P$ is defined by
the two equivalent expressions:
$$   \Psi(P)
    =
       \sum_{S} f_{S} \cdot v_{S}
    =
       \sum_{S} h_{S} \cdot u_{S}   , $$
where the two sums range over all subsets $S$ of $\{1,2, \ldots, n\}$.
For more details on the $\ab$-index,
see~\cite{Ehrenborg_Readdy} or
the book~\cite[Section~3.17]{EC1}.

Recall that a graded poset $P$ is {\em Eulerian}
if every non-trivial interval has the same number
of elements of even rank as odd rank.
Equivalently, a poset is Eulerian if its M\"obius function
satisfies $\mu(x,y) = (-1)^{\rho(x,y)}$
for all $x \leq y$ in $P$.
When the graded poset $P$ is Eulerian
then the $\ab$-index $\Psi(P)$ can be written
in terms of the non-commuting variables
$\cv = \av+\bv$ and $\dv = \av\bv + \bv\av$
and it is called the $\cd$-index;
see~\cite{Bayer_Klapper}.
For an $n$-dimensional convex polytope $V$ its face lattice 
${\mathscr L}(V)$ is
an Eulerian poset of rank~$n+1$.
In this case we write $\Psi(V)$ for the $\ab$-index ($\cd$-index)
instead of the cumbersome~$\Psi({\mathscr L}(V))$.

There are also two products on graded posets that we will study.
The first is the {\em Cartesian product}, defined by
$P \times Q = \{(x,y) \: : \: x \in P, y \in Q\}$ with the
order relation $(x,y) \leq_{P \times Q} (z,w)$
if $x \leq_{P} z$ and $y \leq_{Q} w$.
Note that the rank of the Cartesian product of
two graded posets of ranks $m$ and $n$ is $m+n$.
As a special case we define
$\Pyr(P) = P \times B_{1}$, where $B_{1}$ is the Boolean
algebra of rank~$1$. The geometric reason
for the notation $\Pyr$
is that
this operation corresponds to the geometric operation
of taking the pyramid of a polytope, that is,
${\mathscr L}(\Pyr(V)) = \Pyr({\mathscr L}(V))$
for a polytope $V$.

The second product is the {\em dual diamond product},
defined by
$$  P \diamond^{*} Q
  =
  (P - \{\ho_{P}\}) \times (Q - \{\ho_{Q}\}) \cup \{\ho\}  .  $$
The rank of the product
$P \diamond^{*} Q$
is the sum of the ranks of $P$ and $Q$ minus one.
This is the dual to the diamond product $\diamond$ defined by
removing the minimal elements of the posets,
taking the Cartesian product
and then adjoining a new minimal element. The product $\diamond$
behaves well with the quasi-symmetric functions of type $B$.
(See Sections~\ref{section_Cartesian}
and~\ref{section_diamond}.)
However, we will dualize our presentation and keep working with
the product~$\diamond^{*}$.

Yet again, we have an important special case. We define
$\Bipyr(P) = P \diamond^{*} B_{2}$.
The geometric motivation 
is the connection to the bipyramid of a polytope, that is,
${\mathscr L}(\Bipyr(V)) = \Bipyr({\mathscr L}(V))$
for a polytope $V$.

\section{Pyramids and bipyramids}

Define on the ring $\zab$
of non-commutative polynomials in the variables $\av$ and $\bv$
the two derivations~$G$ and $D$ by
$$ \begin{array}{c l}
G(1) = 0, & G(\av) = \bv\av, \:\:\: G(\bv) = \av\bv , \\
D(1) = 0, & D(\av) = D(\bv) = \av\bv + \bv\av .
\end{array} $$
Extend these two derivations to all of $\zab$ by linearity.
The {\em pyramid} and the {\em bipyramid operators} are given
by
$$
\Pyr(w) = G(w) + w \cdot \cv 
\:\:\:\: \text{ and } \:\:\:\:
\Bipyr(w) = D(w) + \cv \cdot w .
$$
These two operators are suitably named,
since for a graded poset $P$ we have
$$   \Psi(\Pyr(P)) = \Pyr(\Psi(P)) 
\:\:\:\: \text{ and } \:\:\:\:
     \Psi(\Bipyr(P)) = \Bipyr(\Psi(P)) . $$
For further details, see~\cite{Ehrenborg_Readdy}.

\begin{theorem}
The Major MacMahon map $\Theta$ interacts with
right multiplication by $\cv$,
the derivation~$G$,
the pyramid and the bipyramid operators as follows:
\begin{align}
   \Theta(w \cdot \cv) & =  (1 + q^{n+1}) \cdot \Theta(w) ,
\label{equation_c} \\
   \Theta(G(w)) & =  q \cdot [n] \cdot \Theta(w) ,
\label{equation_G} \\
   \Theta(\Pyr(w)) & =  [n+2] \cdot \Theta(w) , 
\label{equation_pyramid} \\
   \Theta(\Bipyr(w)) & =  [2] \cdot [n+1] \cdot \Theta(w) , 
\label{equation_bipyramid}
\end{align}
where $w$ is a homogeneous $\ab$-polynomial of degree $n$.
\end{theorem}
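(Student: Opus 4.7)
The plan is to reduce all four identities to direct combinatorial computations on a single monomial, exploiting the linearity of $\Theta$. Throughout, write $w = u_1 u_2 \cdots u_n$ with $\bv$-positions $B = \{j : u_j = \bv\}$ and cardinality $b = |B|$, so that $\Theta(w) = q^{\sum_{j \in B} j}$. Identity~\eqref{equation_c} is essentially by inspection: expand $w \cv = w\av + w\bv$ and observe that $w\av$ appends an $\av$ contributing nothing to $\Theta$, while $w\bv$ appends a $\bv$ at position $n+1$, yielding exactly $(1 + q^{n+1})\Theta(w)$.

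For identity~\eqref{equation_G}, the idea is to apply the Leibniz rule $G(w) = \sum_{i=1}^{n} u_1 \cdots u_{i-1} \cdot G(u_i) \cdot u_{i+1} \cdots u_n$ and to track carefully how each replacement $u_i \mapsto \bv\av$ or $u_i \mapsto \av\bv$ alters the $\bv$-positions, including the automatic rightward shift of everything past position $i$. A short bookkeeping step gives, for the $i$-th summand, a contribution $q^{e_i} \Theta(w)$ where $e_i = i + |B_{>i}|$ if $u_i = \av$ and $e_i = 1 + |B_{>i}|$ if $u_i = \bv$, with $B_{>i} = B \cap \{i+1, \ldots, n\}$. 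The main combinatorial step, and what I expect to be the principal obstacle, is showing that the multiset $\{e_i\}_{i=1}^{n}$ equals $\{1, 2, \ldots, n\}$. I would verify this by indexing $B = \{i_1 < \cdots < i_b\}$ and $A = \{1, \ldots, n\} \setminus B = \{j_1 < \cdots < j_{n-b}\}$; the substitutions $|B_{>i_k}| = b-k$ and $|B_{>j_k}| = b+k-j_k$ then yield $\{e_{i_k}\}_k = \{1, \ldots, b\}$ and $\{e_{j_k}\}_k = \{b+1, \ldots, n\}$, whose union is $\{1, \ldots, n\}$. Summing gives $\Theta(G(w)) = q[n] \Theta(w)$.

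Identity~\eqref{equation_pyramid} is then immediate from the previous two, since $1 + q[n] + q^{n+1} = [n+2]$. For identity~\eqref{equation_bipyramid}, the same strategy applies, but left multiplication by $\av$ or $\bv$ shifts all existing $\bv$-positions by one: a direct check gives $\Theta(\av w) = q^b \Theta(w)$ and $\Theta(\bv w) = q^{b+1}\Theta(w)$, hence $\Theta(\cv \cdot w) = q^b [2] \Theta(w)$. Expanding $D(w)$ via Leibniz and summing the two terms at each position factors out $(1+q)$, leaving a contribution $q^{e'_i}$ where $e'_i = i + |B_{>i}|$ if $u_i = \av$ and $e'_i = |B_{>i}|$ if $u_i = \bv$. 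The same index bookkeeping now produces $\{e'_i\}_{i=1}^{n} = \{0, 1, \ldots, b-1, b+1, \ldots, n\}$, i.e., $\{0, 1, \ldots, n\}$ with $q^b$ missing, so $\Theta(D(w)) = [2]([n+1] - q^b) \Theta(w)$. When $\Theta(\cv \cdot w) = q^b [2]\Theta(w)$ is added back, the $q^b$ terms cancel and we obtain $[2][n+1]\Theta(w)$, as required.
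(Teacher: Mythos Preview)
Your proposal is correct and takes essentially the same approach as the paper's own proof. The paper's presentation differs only cosmetically: rather than computing the exponents $e_i$ at each position and then verifying that $\{e_i\}_{i=1}^n=\{1,\ldots,n\}$, it relabels the letters of $w$ (the $\bv$'s by $1,\ldots,b$ right-to-left, the $\av$'s by $b+1,\ldots,n$ left-to-right) and checks directly that applying $G$ to the letter with label $i$ contributes exactly the factor $q^i$ --- this relabeling is precisely the inverse of your map $i\mapsto e_i$, and the bipyramid argument is handled by the analogous shift of the $\bv$-labels to $0,\ldots,b-1$ with $\Theta(\cv\cdot w)=q^{b}[2]\,\Theta(w)$ supplying the missing case $i=b$, exactly as you found.
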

\begin{proof}
It is enough to prove the four identities for an $\ab$-monomial $w$
of degree $n$.
Directly we have that
$\Theta(w \cdot \av) = \Theta(w)$
and
$\Theta(w \cdot \bv) = q^{n+1} \cdot \Theta(w)$.
Adding these two identities yields
equation~\eqref{equation_c}.

Assume that $w$ consists of $k$ $\bv$'s.
We label the $n$ letters of $w$ as follows:
The $k$ $\bv$'s are labeled $1$ through $k$ reading from
right to left, whereas the $n-k$ $\av$'s are labeled
$k+1$ through~$n$ reading left to right. As an example, the word
$w = \av\av\bv\av\bv\bv\av$ is written
as $w_{4} w_{5} w_{3} w_{6} w_{2} w_{1} w_{7}$.

Identity~\eqref{equation_G}
is a consequence of the following claim.
Applying the derivation $G$ only to the letter~$w_{i}$ and
then applying the Major MacMahon map yields $q^{i} \cdot \Theta(w)$,
that is,
\begin{equation}
 \Theta(u \cdot G(w_{i}) \cdot v) = q^{i} \cdot \Theta(u \cdot w_{i} \cdot v) , 
\label{equation_i}
\end{equation}
where $w$ is factored as $u \cdot w_{i} \cdot v$.
To see this, first consider when $1 \leq i \leq k$.
There are $i$ $\bv$'s to the right of $w_{i}$
including $w_{i}$ itself. They each are shifted one step to the right when
replacing $w_{i} = \bv$ with $G(\bv) = \av\bv$ and hence
we gain a factor of $q^{i}$.
The second case is when $k+1 \leq i \leq n$.
Then $w_{i}$ is an $\av$ and is replaced by $\bv\av$
under the derivation $G$.
Assume that there are $j$ $\bv$'s to the right of $w_{i}$.
When these $j$ $\bv$'s are shifted one step to the right they contribute
a factor of $q^{j}$. We also create a new~$\bv$. 
It has $i-k-1$ $\av$'s to the left and $k-j$ $\bv$'s to the left.
Hence the position of the new $\bv$ is
$(i-k-1) + (k-j) + 1 = i-j$ and thus its contribution is $q^{i-j}$.
Again the factor is given by $q^{j} \cdot q^{i-j} = q^{i}$,
proving the claim.
Now by summing over these $n$ cases,
identity~\eqref{equation_G} follows.
Identity~\eqref{equation_pyramid} 
is the sum of identities~\eqref{equation_c} 
and~\eqref{equation_G}.

To prove identity~\eqref{equation_bipyramid},
we use a different labeling of
the monomial $w$. This time label the $k$ $\bv$'s with
the subscripts $0$ through $k-1$,
rather than $1$ through $k$. That is, in our example
$w = \av\av\bv\av\bv\bv\av$ is now labeled as
$w_{4} w_{5} w_{2} w_{6} w_{1} w_{0} w_{7}$.
We claim that for
$w = u \cdot w_{i} \cdot v$ we have that
$$ \Theta(u \cdot D(w_{i}) \cdot v) = q^{i} \cdot [2] \cdot \Theta(w) . $$
The first case is $0 \leq i \leq k-1$. Then $w_{i} = \bv$
has $i$ $\bv$'s to its right. Thus when replacing $\bv$ with~$\bv\av$
there are $i$ $\bv$'s that are shifted one step, giving the factor $q^{i}$.
Similarly, when replacing $w_{i}$ with $\av\bv$, there are
$i+1$ $\bv$'s that are shifted one step, giving the factor $q^{i+1}$.
The sum of the two factors is $q^{i} \cdot [2]$.
The second case is $k+1 \leq i \leq n$.
It is as the second case above when replacing $w_{i}$ with~$\bv\av$,
yielding the factor $q^{i}$. When replacing $w_{i}$ with~$\av\bv$
there is one more shift, giving $q^{i+1}$. Adding these two subcases
completes the proof of the claim.

It is straightforward to observe that 
$$ \Theta(\cv \cdot w) = q^{k} \cdot [2] \cdot \Theta(w) . $$
Calling this the case $i=k$, the identity~\eqref{equation_bipyramid}
follows by summing the $n+1$ cases $0 \leq i \leq n$.
\end{proof}

Iterating
equations~\eqref{equation_pyramid}
and~\eqref{equation_bipyramid}
we obtain
that the Major MacMahon map of the $\ab$-index of
the $n$-dimensional simplex $\Delta_{n}$
and
the $n$-dimensional cross-polytope $C^{*}_{n}$.
\begin{corollary}
The $n$-dimensional simplex $\Delta_{n}$
and the $n$-dimensional cross-polytope $C^{*}_{n}$
satisfy
\begin{align*}
  \Theta(\Psi(\Delta_{n})) & = [n+1]!   ,                   \\
  \Theta(\Psi(C^{*}_{n}))  & = [2]^{n} \cdot [n]!   .
\end{align*}
\end{corollary}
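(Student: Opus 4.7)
The plan is to write each polytope as an iterated pyramid or bipyramid of a point, then apply the previous theorem inductively.

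First, I observe that the $n$-simplex arises from the $(n-1)$-simplex by taking the pyramid, so $\Delta_n = \Pyr(\Delta_{n-1})$ geometrically, with base case $\Delta_0$ a single point whose $\ab$-index equals the empty word $1$, giving $\Theta(\Psi(\Delta_0)) = 1 = [1]!$. Since $\Pyr$ intertwines the geometric and poset-theoretic operations, $\Psi(\Delta_n) = \Pyr(\Psi(\Delta_{n-1}))$. Noting that $\Psi(\Delta_{n-1})$ is homogeneous of degree $n-1$, the pyramid identity from the theorem yields
$$\Theta(\Psi(\Delta_n)) = [n+1] \cdot \Theta(\Psi(\Delta_{n-1})).$$
Iterating gives $\Theta(\Psi(\Delta_n)) = [n+1] \cdot [n] \cdots [1] = [n+1]!$.

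The cross-polytope case proceeds identically using the bipyramid identity. Since $C^{*}_{n} = \Bipyr(C^{*}_{n-1})$ geometrically and $\Psi(C^{*}_{n-1})$ is homogeneous of degree $n-1$, the bipyramid identity yields
$$\Theta(\Psi(C^{*}_{n})) = [2] \cdot [n] \cdot \Theta(\Psi(C^{*}_{n-1})),$$
with base case $\Theta(\Psi(C^{*}_{0})) = 1$. Iterating produces $[2]^{n} \cdot [n]!$.

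There is no genuine obstacle here; the only care required is to track the degree of the $\ab$-index at each step so that the correct instance of the pyramid or bipyramid identity is applied, and to verify the base case.
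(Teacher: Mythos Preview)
Your proof is correct and is exactly the argument the paper has in mind: it states just before the corollary that the result follows by iterating equations~\eqref{equation_pyramid} and~\eqref{equation_bipyramid}, and you have carried out that iteration carefully, with the correct degree tracking and base cases.
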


\section{Simplicial posets}

A graded poset $P$ is {\em simplicial} if all of its lower order intervals
are Boolean, that is, for all elements $x < \ho$
the interval $[\hz,x]$ is isomorphic to
the Boolean algebra $B_{\rho(x)}$.
It is well-known that
all the flag information of a simplicial poset of rank $n+1$ is contained
in the $f$-vector $(f_{0}, f_{1}, \ldots, f_{n})$,
where
$f_{0} = 1$ and $f_{i} = f_{\{i\}}$ for $1 \leq i \leq n$.
The $h$-vector, equivalently,
the $h$-polynomial
$h(P)
  =
h_{0} + h_{1} \cdot q + \cdots + h_{n} \cdot q^{n}$
of a simplicial poset $P$,
is defined by the polynomial relation
$$
  h(q)
 = 
  \sum_{i=0}^{n} f_{i} \cdot q^{i} \cdot (1-q)^{n-i}  .  $$
See for instance~\cite[Section~8.3]{Ziegler}.
The $h$-polynomial and the bipyramid operation
interact as follows:
$$
h(\Bipyr(P)) = (1+q) \cdot h(P) .
$$

We can now evaluate the Major MacMahon map on the
$\ab$-index of a simplicial poset.
\begin{theorem}
For a simplicial poset $P$ of rank $n+1$
the following identity holds:
\begin{equation}
  \Theta(\Psi(P)) = [n]! \cdot h(P)   . 
\label{equation_simplicial_poset}
\end{equation}
\end{theorem}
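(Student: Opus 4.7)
The plan is to rewrite $\Psi(P)$ as an explicit linear combination of quantities whose images under $\Theta$ have already been computed, namely right multiples of $\Psi(B_{i})$. The key leverage is that every lower interval $[\hz,x]$ of a simplicial poset is Boolean, so the flag $f$-vector factors multiplicatively: for a nonempty subset $S = \{s_{1} < \cdots < s_{k}\} \subseteq \{1,\ldots,n\}$ one has
$$ f_{S} = f_{s_{k}} \cdot \binom{s_{k}}{s_{k-1}} \binom{s_{k-1}}{s_{k-2}} \cdots \binom{s_{2}}{s_{1}} , $$
and the trailing product is precisely the flag $f$-vector of the Boolean algebra $B_{s_{k}}$ evaluated at $\{s_{1},\ldots,s_{k-1}\} \subseteq \{1,\ldots,s_{k}-1\}$.

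Starting from $\Psi(P) = \sum_{S} f_{S} v_{S}$, I would peel off the empty chain, giving the summand $v_{\emptyset} = (\av-\bv)^{n}$, and group the remaining terms by $i = \max S$. Writing $S = T \cup \{i\}$ with $T \subseteq \{1,\ldots,i-1\}$, the monomial $v_{S}$ splits as the degree-$(i{-}1)$ polynomial for $T$ inside $\{1,\ldots,i-1\}$, followed by $\bv$ in position $i$, followed by $(\av-\bv)^{n-i}$. Combining this with the multiplicative formula for $f_{S}$, the inner sum collapses against the definition of the $\ab$-index of $B_{i}$, yielding
$$ \Psi(P) = (\av-\bv)^{n} + \sum_{i=1}^{n} f_{i} \cdot \Psi(B_{i}) \cdot \bv \cdot (\av-\bv)^{n-i} . $$

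Applying $\Theta$ is then routine. From the proof of the preceding theorem, $\Theta(w\av) = \Theta(w)$ and $\Theta(w\bv) = q^{\deg w + 1} \Theta(w)$, hence $\Theta(w(\av-\bv)) = (1 - q^{\deg w + 1}) \Theta(w)$. Iterating on $1$ gives $\Theta((\av-\bv)^{n}) = \prod_{j=1}^{n} (1-q^{j}) = [n]!(1-q)^{n}$. For $i \geq 1$, since $\Psi(B_{i})$ has degree $i-1$, one obtains $\Theta(\Psi(B_{i}) \cdot \bv \cdot (\av-\bv)^{n-i}) = q^{i} \cdot \Theta(\Psi(B_{i})) \cdot \prod_{j=i+1}^{n} (1-q^{j})$. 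The preceding corollary (applied to $\Delta_{i-1}$, whose face lattice is $B_{i}$) supplies $\Theta(\Psi(B_{i})) = [i]!$, and the remaining product simplifies to $\frac{[n]!}{[i]!}(1-q)^{n-i}$. Each term thus contributes $[n]! \cdot q^{i}(1-q)^{n-i}$, and summing over $i = 0,1,\ldots,n$ with $f_{0}=1$ produces $[n]! \cdot h(P)$.

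The main obstacle is the decomposition step: once one recognizes that the ratio $f_{S}/f_{\max S}$ coming from the simplicial structure matches exactly the flag $f$-vector of the smaller Boolean algebra $B_{\max S}$, so that $\Psi(B_{i})$ appears on the right, the remainder is a mechanical application of the formulas already established in the previous theorem and its corollary.
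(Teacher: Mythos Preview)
Your argument is correct. The decomposition
\[
\Psi(P) = (\av-\bv)^{n} + \sum_{i=1}^{n} f_{i}\cdot \Psi(B_{i})\cdot \bv \cdot (\av-\bv)^{n-i}
\]
is exactly the simplicial identity $f_{S} = f_{\max S}\cdot f^{B_{\max S}}_{S\setminus\{\max S\}}$ rewritten in $\ab$-language, and the subsequent evaluation of $\Theta$ on each summand is a clean use of the right-multiplication rules and the value $\Theta(\Psi(B_{i}))=[i]!$.

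The paper takes a different route. Rather than expanding $\Psi(P)$ against the $f$-vector, it argues by linearity in the $h$-vector: it checks the identity on the rank-$(n{+}1)$ posets $\Bipyr^{i}(B_{n-i}\cup\{\ho\})$, whose $h$-polynomials $(1+q)^{i}$ span all degree-$\le n$ polynomials, and propagates the result using the bipyramid relation $\Theta(\Bipyr(w)) = [2]\cdot[n{+}1]\cdot\Theta(w)$ together with $h(\Bipyr(P)) = (1+q)\,h(P)$. Your proof is more self-contained and direct---it needs only the elementary rules $\Theta(w\av)=\Theta(w)$, $\Theta(w\bv)=q^{\deg w+1}\Theta(w)$ and the pyramid corollary, never invoking the bipyramid machinery---while the paper's proof highlights that the theorem is really a compatibility between $\Theta\circ\Bipyr$ and $h\circ\Bipyr$, which ties it thematically to the operator viewpoint of Section~3. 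Both arguments ultimately rest on the same linearity, yours in the $f_{i}$ and theirs in the $h_{i}$.
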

\begin{proof}
Let $\TTTT{n}$ denote the Boolean algebra $B_{n}$
with a new maximal element added.
Note that $\TTTT{n}$ is indeed a simplicial poset
and its $h$-polynomial is $1$. Furthermore,
equation~\eqref{equation_simplicial_poset}
holds for $\TTTT{n}$ since
$$ \Theta(\Psi(\TTTT{n})) 
   =
     \Theta(\Psi(B_{n}) \cdot \av) 
   =
     \Theta(\Psi(B_{n}))
   =
     [n]!
   =
     [n]! \cdot h(\TTTT{n})  .  $$
Also, if~\eqref{equation_simplicial_poset}
holds for a poset $P$ then it also holds for $\Bipyr(P)$,
since we have
$$     \Theta(\Psi(\Bipyr(P)))
      =
         [2] \cdot [n+1] \cdot \Theta(\Psi(P))
      =
         [2] \cdot [n+1] \cdot [n]! \cdot h(P)
      =
         [n+1]! \cdot h(\Bipyr(P)) .
$$

Observe that both sides of~\eqref{equation_simplicial_poset}
are linear in the $h$-polynomial.
Hence to prove it for any simplicial poset $P$
it is enough to prove it for a basis of the span
of all simplicial posets of rank~$n+1$.
Such a basis is given by the posets
$$   {\mathcal B}_{n}
    =     
       \left\{ \Bipyr^{i}(\TTTT{n-i}) \right\}
                        _{0 \leq i \leq n}     . $$ 
This is a basis since the polynomials
$h(\Bipyr^{i}(\TTTT{n-i})) = (1+q)^{i}$,
for $0 \leq i \leq n$, are a basis for
polynomials in the variable $q$ of degree at most $n$.

Finally, since every element in the basis is built up
by iterating bipyramids of the posets $\TTTT{n}$, the theorem holds
for all simplicial posets.
\end{proof}

Observe
that the poset $\Bipyr^{i}(\TTTT{n-i})$
is the face lattice of the simplicial complex
consisting of the $2^{i}$ facets of the $n$-dimensional
cross-polytope in the cone $x_{1}, \ldots, x_{n-i} \geq 0$.

For an Eulerian simplicial poset $P$, the
$h$-vector is symmetric, that is, $h_{i} = h_{n-i}$.
In other words, the $h$-polynomial is palindromic.
Stanley~\cite{Stanley} introduced the
{\em simplicial shelling components},
that is, 
the $\cd$-polynomials~$\check{\Phi}_{n,i}$
such that the $\cd$-index of
an Eulerian simplicial poset $P$ of rank $n+1$ is given by
\begin{equation}
  \Psi(P)
    =
  \sum_{i=0}^{n} h_{i} \cdot \check{\Phi}_{n,i}   . 
\label{equation_simplicial}
\end{equation}
These $\cd$-polynomials satisfy the recursion
$\check{\Phi}_{n,0} = \Psi(B_{n}) \cdot \cv$
and
$\check{\Phi}_{n,i} = G(\check{\Phi}_{n-1,i-1})$;
see~\cite[Section~8]{Ehrenborg_Readdy}.
The Major MacMahon map of these polynomials is
described by the next result.
\begin{corollary}
The Major MacMahon map of
the simplicial shelling components is given by
$$   \Theta(\check{\Phi}_{n,i})
     =
       q^{i} \cdot [2(n-i)] \cdot [n-1]!  .       $$
\end{corollary}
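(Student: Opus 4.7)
The plan is to induct on $i$, exploiting the recursion
$\check{\Phi}_{n,0} = \Psi(B_{n}) \cdot \cv$ and
$\check{\Phi}_{n,i} = G(\check{\Phi}_{n-1,i-1})$
together with the identities~\eqref{equation_c} and~\eqref{equation_G} from the theorem. A preliminary observation is that $\check{\Phi}_{n,i}$ is homogeneous of degree $n$: indeed, $\Psi(B_{n})$ has degree $n-1$, so $\Psi(B_{n}) \cdot \cv$ has degree $n$, and since $G$ preserves degree, so does each $\check{\Phi}_{n,i}$.

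For the base case $i = 0$, I would apply~\eqref{equation_c} to $w = \Psi(B_{n})$ (which is homogeneous of degree $n-1$) to get
$$\Theta(\check{\Phi}_{n,0}) = \Theta(\Psi(B_{n}) \cdot \cv) = (1 + q^{n}) \cdot \Theta(\Psi(B_{n})).$$
The preceding corollary, applied to the $(n-1)$-simplex $\Delta_{n-1} = B_{n}$, gives $\Theta(\Psi(B_{n})) = [n]!$. Then the elementary identity $(1 + q^{n}) \cdot [n] = [2n]$ yields
$$\Theta(\check{\Phi}_{n,0}) = (1 + q^{n}) \cdot [n] \cdot [n-1]! = [2n] \cdot [n-1]!,$$
which matches the claimed formula at $i = 0$.

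For the inductive step, assuming the formula at $(n-1, i-1)$, I apply~\eqref{equation_G} to the degree $n-1$ polynomial $\check{\Phi}_{n-1,i-1}$:
$$\Theta(\check{\Phi}_{n,i}) = \Theta(G(\check{\Phi}_{n-1,i-1})) = q \cdot [n-1] \cdot \Theta(\check{\Phi}_{n-1,i-1}).$$
Substituting the inductive hypothesis
$\Theta(\check{\Phi}_{n-1,i-1}) = q^{i-1} \cdot [2(n-i)] \cdot [n-2]!$
gives
$$\Theta(\check{\Phi}_{n,i}) = q \cdot [n-1] \cdot q^{i-1} \cdot [2(n-i)] \cdot [n-2]! = q^{i} \cdot [2(n-i)] \cdot [n-1]!,$$
completing the induction.

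There is no serious obstacle here; the proof is essentially a bookkeeping exercise combining the recursion for $\check{\Phi}_{n,i}$ with the two identities~\eqref{equation_c} and~\eqref{equation_G}. The only subtle point worth double-checking is the factorization $[2n] = (1+q^{n}) \cdot [n]$ used in the base case, and the correct tracking of degrees when invoking~\eqref{equation_G}.
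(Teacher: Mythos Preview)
Your argument is essentially identical to the paper's: both handle the base case via~\eqref{equation_c} together with $\Theta(\Psi(B_{n}))=[n]!$ and the factorization $(1+q^{n})\cdot[n]=[2n]$, and then induct using~\eqref{equation_G} applied to $\check{\Phi}_{n-1,i-1}$. One small slip in your write-up: $G$ does \emph{not} preserve degree (since $G(\av)=\bv\av$ and $G(\bv)=\av\bv$, it raises degree by~$1$), which is precisely why $\check{\Phi}_{n,i}=G(\check{\Phi}_{n-1,i-1})$ has degree $n$ when $\check{\Phi}_{n-1,i-1}$ has degree $n-1$; your subsequent computations use the correct degrees, so only the parenthetical justification needs fixing.
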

\begin{proof}
When $i=0$ we have
$\Theta(\check{\Phi}_{n,0})
   =
\Theta(\Psi(B_{n}) \cdot \cv)
   =
(1+q^{n}) \cdot [n]!
   =
[2n] \cdot [n-1]!$.
Also when $i \geq 1$ we obtain
$\Theta(\check{\Phi}_{n,i})
   =
\Theta(G(\check{\Phi}_{n-1,i-1}))
   =
q \cdot [n-1] \cdot \Theta(\check{\Phi}_{n-1,i-1})
   =
q^{i} \cdot [2(n-i)] \cdot [n-1]!$.
\end{proof}

We end with the following observation.
\begin{theorem}
For an Eulerian poset $P$ of rank $n+1$,
the polynomial $[2]^{\lceil n/2 \rceil}$
divides $\Theta(\Psi(P))$.
\end{theorem}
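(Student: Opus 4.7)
The plan is to exploit that $\Psi(P)$ is a $\cv\dv$-polynomial of degree $n$, since $P$ is Eulerian. By linearity of $\Theta$, it then suffices to show that $[2]^{\lceil n/2 \rceil}$ divides $\Theta(m)$ in $\Zzz[q]$ for every $\cd$-monomial $m$ of degree $n$. First I would pin down how right multiplication changes $[2]$-divisibility. Identity~\eqref{equation_c} already gives $\Theta(w \cdot \cv) = (1+q^{n+1}) \Theta(w)$, and directly from the definition,
\begin{equation*}
\Theta(w \cdot \dv)
= \Theta(w \cdot \av\bv) + \Theta(w \cdot \bv\av)
= (q^{n+2} + q^{n+1}) \Theta(w)
= q^{n+1} \cdot [2] \cdot \Theta(w).
\end{equation*}
Moreover, $[2]$ divides $1 + q^{p}$ precisely when $p$ is odd, since $-1$ is a root of $1 + q^{p}$ in exactly that case.

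Building a $\cd$-monomial $m = x_{1} x_{2} \cdots x_{k}$ by iterated right multiplication, I would count $[2]$ factors in $\Theta(m)$ step by step. Each $\dv$ factor contributes exactly one, while a $\cv$ factor contributes one if and only if its starting position $1 + \deg(x_{1} \cdots x_{i-1})$ is odd. The key observation is that, since $\dv$ adds $2$ to the running degree and $\cv$ adds $1$, the parity of that position depends solely on the number of $\cv$'s occurring earlier in $m$, and not at all on the $\dv$'s. Hence, among the $a$ copies of $\cv$ in $m$, precisely the first, third, fifth, \ldots\ from the left contribute $[2]$-factors, giving exactly $\lceil a/2 \rceil$ of them.

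Writing $b$ for the number of $\dv$'s in $m$, so that $a + 2b = n$ and $a \equiv n \pmod{2}$, the total number of $[2]$ factors in $\Theta(m)$ is
\begin{equation*}
b + \lceil a/2 \rceil
= \left\lceil (2b + a)/2 \right\rceil
= \lceil n/2 \rceil,
\end{equation*}
proving the divisibility on each $\cd$-monomial. Summing over all $\cd$-monomials appearing in $\Psi(P)$ yields the theorem. The only step requiring real thought is the parity bookkeeping; the fact that only $\cv$'s, and not $\dv$'s, affect the parity of subsequent $\cv$ positions is what makes the count collapse uniformly to $\lceil n/2 \rceil$ for every $\cd$-monomial of degree $n$, independently of the shape of $m$.
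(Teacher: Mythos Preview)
Your proof is correct and follows essentially the same approach as the paper: reduce to $\cd$-monomials, factor $\Theta(m)$ over the $\cv$'s and $\dv$'s, and observe that each $\dv$ contributes a $[2]$ while each $\cv$ at an odd position contributes a $[2]$. The only cosmetic difference is in the final count: the paper indexes directly by the $\lceil n/2\rceil$ odd positions (each covered by a contributing $\cv$ or by a $\dv$), whereas you arrive at the same total via the identity $b+\lceil a/2\rceil=\lceil n/2\rceil$.
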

\begin{proof}
It is enough to show this result for a $\cd$-monomial $w$
of degree $n$.
A $\cv$ in an odd position~$i$ of~$w$ yields a factor of $1+q^{i}$.
A $\dv$ that covers an odd position $i$ of~$w$ yields either
$q^{i-1}+q^{i}$ or $q^{i}+q^{i+1}$.
Each of these polynomials contributes a factor of $1+q$.
The result follows since there are 
$\lceil n/2 \rceil$ odd positions.
\end{proof}

\section{The Cartesian product of posets}
\label{section_Cartesian}

We now study how the Major MacMahon map behaves
under the Cartesian product.
Recall that for a graded poset~$P$ the $\ab$-index $\Psi(P)$ encodes
the flag $f$-vector information of the poset $P$.
There is another encoding of this information as a quasi-symmetric
function. For further information about quasi-symmetric functions,
see~\cite[Section~7.19]{EC2}.

A composition $\alpha$ of $n$ is a list of positive integers
$(\alpha_{1}, \alpha_{2}, \ldots, \alpha_{k})$ such that 
$\alpha_{1} + \alpha_{2} + \cdots + \alpha_{k} = n$.
Let $\Comp(n)$ denote the set
of compositions of $n$. There are three natural bijections
between $\ab$-monomials $u$ of degree $n$, subsets $S$
of the set $\{1,2,\ldots,n\}$ and compositions of $n+1$.
Given a composition 
$\alpha \in \Comp_{n+1}$
we have the subset $S_{\alpha}$,
the $\ab$-monomial $u_{\alpha}$
and the $\ab$-polynomial $v_{\alpha}$
defined by
\begin{align*}
S_{\alpha}
    &   = 
    \{\alpha_{1}, \alpha_{1} + \alpha_{2}, \ldots,
                   \alpha_{1} + \cdots + \alpha_{k-1}\} , \\
u_{\alpha}
    &  =
\av^{\alpha_{1}-1}  \cdot
\bv \cdot
\av^{\alpha_{2}-1}  \cdot
\bv \cdots
\bv \cdot
\av^{\alpha_{k}-1} , \\
v_{\alpha}
    &  =
(\av-\bv)^{\alpha_{1}-1}  \cdot
\bv \cdot
(\av-\bv)^{\alpha_{2}-1}  \cdot
\bv \cdots
\bv \cdot
(\av-\bv)^{\alpha_{k}-1} .
\end{align*}
For $S$ a subset of $\{1,2, \ldots, n\}$
let $\co(S)$ denote associated composition.

The {\em monomial quasi-symmetric function} $M_{\alpha}$
is defined as the sum
$$   M_{\alpha}
    =
      \sum_{i_{1} < i_{2} < \cdots < i_{k}}
            t_{i_{1}}^{\alpha_{1}} \cdot
            t_{i_{2}}^{\alpha_{2}} \cdots
            t_{i_{k}}^{\alpha_{k}}    .  $$
A second basis is given by
the {\em fundamental quasi-symmetric function} $L_{\alpha}$
defined as
$$   L_{\alpha}
    =
      \sum_{S_{\alpha} \subseteq T \subseteq \{1,2, \ldots, n\}}
             M_{\co(T)}   .  $$

Following~\cite{Ehrenborg_Readdy_Tchebyshev_transform}
define an injective linear map
$\gamma : \zab \longrightarrow \QSym$ by
\begin{equation*}
   \gamma\left( v_{\alpha} \right)
  =
  M_{\alpha} ,
\label{equation_gamma}
\end{equation*}
for a composition $\alpha$ of $n \geq 1$.
The image of $\gamma$ is all quasi-symmetric functions
without constant term. Moreover, the image
of the $\ab$-monomial $u_{\alpha}$ under $\gamma$
is the fundamental
quasi-symmetric function
$L_{\alpha}$, that is,
$$   \gamma(u_{\alpha}) = L_{\alpha} .  $$

Another way to encode the flag vectors of a poset $P$
is by the {\em quasi-symmetric function} of the poset.
It is quickly defined as $F(P) = \gamma(\Psi(P))$.
A more poset-oriented definition is
the following limit of sums over multichains:
$$   F(P)
   =
       \lim_{k \longrightarrow \infty}
\sum_{\hz = x_{0} \leq x_{1} \leq \cdots \leq x_{k} = \ho}
            t_{1}^{\rho(x_{0},x_{1})}
               \cdot
            t_{2}^{\rho(x_{1},x_{2})}
               \cdots
            t_{k}^{\rho(x_{k-1},x_{k})}   .   $$
For more on the quasi-symmetric function of a poset,
see~\cite{Ehrenborg_Hopf}.

The {\em stable principal specialization} of a quasi-symmetric
function is the substitution
$\ps(f) = f(1,q,q^{2}, \ldots)$.
Note that this is a homeomorphism, that is,
$\ps(f \cdot g) = \ps(f) \cdot \ps(g)$.

For a composition $\alpha = (\alpha_{1}, \alpha_{2}, \ldots, \alpha_{k})$
let $\alpha^{*}$ denote the reverse composition, that is,
$\alpha^{*} = (\alpha_{k}, \ldots, \alpha_{2}, \alpha_{1})$.
This involution extends to an anti-automorphism on $\QSym$
by $M_{\alpha}^{*} \longmapsto M_{\alpha^{*}}$.
Define $\ps^{*}$ by the relation
$\ps^{*}(f) = \ps(f^{*})$. Informally speaking, this corresponds to
the substitution $\ps^{*}(f) = f(\ldots,q^{2},q,1)$.

\begin{theorem}
For a homogeneous $\ab$-polynomial $w$ of degree $n-1$
the Major MacMahon map is given by
\begin{equation}
\Theta(w)  =  (1-q)^{n} \cdot [n]! \cdot \ps^{*}(\gamma(w)) . 
\label{equation_ps_1}
\end{equation}
For a poset $P$ of rank $n$ this identity is
\begin{equation}
\Theta(\Psi(P))  =  (1-q)^{n} \cdot [n]! \cdot \ps^{*}(F(P)) . 
\label{equation_ps_2}
\end{equation}
\end{theorem}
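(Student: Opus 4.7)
The plan is to prove (\ref{equation_ps_1}) first; then (\ref{equation_ps_2}) follows immediately by specializing $w = \Psi(P)$, which is homogeneous of degree $n-1$ for a poset of rank $n$, and using $F(P) = \gamma(\Psi(P))$. Since $\Theta$, $\gamma$, and $\ps^{*}$ are all $\Zzz$-linear, it suffices to verify (\ref{equation_ps_1}) on the basis $\{u_{\alpha}\}_{\alpha \in \Comp(n)}$ of $\ab$-monomials of degree $n-1$.

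For this basis the $\bv$'s in $u_{\alpha}$ occupy exactly the positions in $S_{\alpha}$, so directly from the definition of the Major MacMahon map we have $\Theta(u_{\alpha}) = q^{\sum_{s \in S_{\alpha}} s}$. Using $\gamma(u_{\alpha}) = L_{\alpha}$ and the cancellation $(1-q)^{n} \cdot [n]! = \prod_{i=1}^{n} (1-q^{i})$, the identity reduces to
$$\ps^{*}(L_{\alpha}) = \frac{q^{\sum_{s \in S_{\alpha}} s}}{\prod_{i=1}^{n} (1-q^{i})}.$$

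To evaluate $\ps^{*}(L_{\alpha}) = \ps(L_{\alpha}^{*})$ I would first establish $L_{\alpha}^{*} = L_{\alpha^{*}}$: the involution $\sigma(t) = n - t$ on $\{1, \ldots, n-1\}$ satisfies both $\sigma(S_{\alpha}) = S_{\alpha^{*}}$ and $\co(\sigma(T)) = \co(T)^{*}$, so applying the anti-automorphism $*$ to $L_{\alpha} = \sum_{T \supseteq S_{\alpha}} M_{\co(T)}$ reshuffles the sum into $L_{\alpha^{*}}$. Then for any $\beta \in \Comp(n)$ I compute $\ps(L_{\beta})$ from the description of $L_{\beta}$ as $\sum t_{i_{1}} \cdots t_{i_{n}}$ over weakly increasing sequences $1 \leq i_{1} \leq \cdots \leq i_{n}$ with strict jumps at the positions in $S_{\beta}$; after setting $t_{j} = q^{j-1}$, the bijection $i_{\ell} \mapsto i_{\ell} - 1 - \#\{k \in S_{\beta} : k < \ell\}$ with unconstrained weakly increasing sequences of nonnegative integers yields
$$\ps(L_{\beta}) = \frac{q^{\sum_{s \in S_{\beta}}(n-s)}}{\prod_{i=1}^{n} (1-q^{i})}.$$
Taking $\beta = \alpha^{*}$ and noting $\sum_{s \in S_{\alpha^{*}}}(n-s) = \sum_{s \in S_{\alpha}} s$ completes the proof. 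The main technical hurdle is the exponent bookkeeping in the principal specialization of $L_{\beta}$: one must check that the cumulative shift absorbing the strictness constraints produces exactly $\sum_{s \in S_{\beta}}(n-s)$ and that the remaining unrestricted sum is the familiar generating function $\prod_{i=1}^{n}(1-q^{i})^{-1}$ for partitions with at most $n$ parts.
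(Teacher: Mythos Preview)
Your proof is correct and follows essentially the same route as the paper's: reduce by linearity to $\ab$-monomials, identify $\Theta(u_{\alpha})$ as $q$ to the sum of the descent positions, and match this against the principal specialization of the corresponding fundamental quasi-symmetric function, with the reversal accounting for the passage from $\ps$ to $\ps^{*}$. The only difference is packaging: the paper cites Stanley's Lemma~7.19.10 in \cite{EC2} for the formula $\ps(L_{\beta}) = q^{\sum_{s \in S_{\beta}}(n-s)}/\prod_{i=1}^{n}(1-q^{i})$, whereas you rederive it via the standard shift bijection, and you make the identity $L_{\alpha}^{*} = L_{\alpha^{*}}$ explicit rather than absorbing it into the choice of composition for the reversed word.
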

\begin{proof}
It is enough to prove identity~\eqref{equation_ps_1}
for an $\ab$-monomial $w$ of degree $n-1$.
Let $\alpha$ be the composition of $n$ corresponding
to the reverse monomial $w^{*}$.
Furthermore, let $e(\alpha)$ be the sum
$\sum_{i \in S_{\alpha}} (n-i)$.
Note that $e(\alpha)$ is in fact the sum $\sum_{i \in S} i$,
where $S$ is the subset associated with the $\ab$-monomial $w$.
That is, we have $q^{e(\alpha)} = \Theta(w)$.
Equation~\eqref{equation_ps_1} follows
from Lemma~7.19.10 in~\cite{EC2}.
By applying the first identity to $\Psi(P)$,
we obtain identity~\eqref{equation_ps_2}.
\end{proof}

Since the quasi-symmetric function is multiplicative under
the Cartesian product, we have the next result.
\begin{theorem}
For two posets $P$ and $Q$ of ranks $m$, respectively $n$,
the following identity holds:
\begin{equation}
\Theta(\Psi(P \times Q))
  =
\Gaussian{m+n}{n}
  \cdot
\Theta(\Psi(P))
  \cdot
\Theta(\Psi(Q)) .
\end{equation}
\label{theorem_Cartesian_product}
\end{theorem}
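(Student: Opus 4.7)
The plan is to combine the previous theorem, which expresses $\Theta(\Psi(P))$ in terms of the stable principal specialization $\ps^{*}(F(P))$, with the multiplicativity of the quasi-symmetric function under Cartesian products. Starting from identity~\eqref{equation_ps_2} applied to $P \times Q$, which has rank $m+n$, I would write
$$\Theta(\Psi(P \times Q)) = (1-q)^{m+n} \cdot [m+n]! \cdot \ps^{*}(F(P \times Q)).$$
The first key step is to invoke the fact, referenced just before the theorem, that the quasi-symmetric function is multiplicative under the Cartesian product: $F(P \times Q) = F(P) \cdot F(Q)$. This follows from the multichain definition of $F$ since a multichain in $P \times Q$ is equivalent to an interleaving of a multichain in $P$ and one in $Q$, and the variables $t_i$ separate accordingly.

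The second step is to observe that $\ps^{*}$ is multiplicative. Although the involution $*$ is an anti-automorphism on $\QSym$, we have $\ps^{*}(fg) = \ps((fg)^{*}) = \ps(g^{*} f^{*}) = \ps(g^{*}) \cdot \ps(f^{*}) = \ps^{*}(f) \cdot \ps^{*}(g)$, where commutativity of $\Zzz[[q]]$ resolves the order reversal. Hence
$$\ps^{*}(F(P \times Q)) = \ps^{*}(F(P)) \cdot \ps^{*}(F(Q)).$$

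Combining these, I obtain
$$\Theta(\Psi(P \times Q)) = (1-q)^{m+n} \cdot [m+n]! \cdot \ps^{*}(F(P)) \cdot \ps^{*}(F(Q)).$$
The final step is to repackage this as a product of the individual $\Theta$-values. Using equation~\eqref{equation_ps_2} again for $P$ and for $Q$, the factors $(1-q)^{m}(1-q)^{n} \cdot \ps^{*}(F(P)) \cdot \ps^{*}(F(Q))$ assemble into $\Theta(\Psi(P)) \cdot \Theta(\Psi(Q)) / ([m]! \cdot [n]!)$, leaving the surplus $q$-multinomial coefficient $[m+n]!/([m]! \cdot [n]!) = \Gaussian{m+n}{n}$ as the desired prefactor.

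Since each ingredient is either stated in the excerpt or essentially a formal manipulation, I do not anticipate a serious obstacle; the only subtle point worth verifying carefully is the interaction of $\ps^{*}$ with products, which is why I would spell out the anti-automorphism argument above rather than merely asserting multiplicativity.
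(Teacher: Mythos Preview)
Your proposal is correct and follows essentially the same route as the paper: apply identity~\eqref{equation_ps_2} to $P\times Q$, use multiplicativity of $F$ under Cartesian product, pass through the principal specialization, and regroup the factorials into the Gaussian binomial. The only cosmetic difference is that the paper dualizes the posets first, writing $\ps^{*}(F(P\times Q))=\ps(F(P^{*}\times Q^{*}))$ so that it can invoke the ordinary homomorphism property of $\ps$, whereas you keep $\ps^{*}$ and verify its multiplicativity via the anti-automorphism argument; both handle the same subtlety.
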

\begin{proof}
The proof is a direct verification as follows:
\begin{align*}
\Theta(\Psi(P \times Q))
   &  =
 (1-q)^{m+n} \cdot [m+n]! \cdot \ps(F(P^{*} \times Q^{*}))  \\
   &  =
 \Gaussian{m+n}{m} \cdot (1-q)^{m+n} \cdot [m]! \cdot [n]!
 \cdot \ps(F(P^{*})) \cdot \ps(F(Q^{*}))  \\
   &  =
 \Gaussian{m+n}{m} \cdot \Theta(\Psi(P))
 \cdot \Theta(\Psi(Q)) .
\qedhere
\end{align*}
\end{proof}

\section{The dual diamond product}
\label{section_diamond}

Define the {\em quasi-symmetric function of type $B^{*}$} of
a graded poset $P$
to be the expression
$$    F_{B^{*}}(P)
     =
        \sum_{\hz \leq x < \ho} F([\hz,x]) \cdot s^{\rho(x,\ho)-1} .  $$
This is an element of the algebra $\QSym \tensor \Zzz[s]$
which we view as the quasi-symmetric functions of type $B^{*}$.
We view $\QSym_{B^{*}}$ as a subalgebra of
$\Zzz[t_{1},t_{2}, \ldots; s]$,
which is quasi-symmetric in the variables $t_{1}, t_{2}, \ldots$.
For instance, a basis for
$\QSym_{B^{*}}$ is given by $M_{\alpha} \cdot s^{i}$
where $\alpha$ ranges over all compositions and $i$ over all
non-negative integers.
Similar to the map
$\gamma: \zab \longrightarrow \QSym$,
we define
$\gamma_{B^{*}}: \zab \longrightarrow \QSym_{B^{*}}$ by
$$
\gamma_{B^{*}}\left(
(\av-\bv)^{\alpha_{1}-1}
\cdot \bv \cdot 
(\av-\bv)^{\alpha_{2}-1}
\cdot \bv \cdots \bv \cdot
(\av-\bv)^{\alpha_{k}-1}
\cdot \bv \cdot 
(\av-\bv)^{p}
\right)
 =
M_{\alpha} \cdot s^{p} ,
$$
where $\alpha$ is the composition
$(\alpha_{1}, \alpha_{2}, \ldots, \alpha_{k})$. 
Similar to the relation
$\gamma(\Psi(P)) = F(P)$, we have
$$ \gamma_{B^{*}}(\Psi(P))   =   F_{B^{*}}(P)   .  $$
Furthermore, the type $B^{*}$ quasi-symmetric function $F_{B^{*}}$
is multiplicative respect to the product~$\diamond^{*}$,
that is,
$F_{B^{*}}(P \diamond^{*} Q) = F_{B^{*}}(P) \cdot F_{B^{*}}(Q)$;
see~\cite[Theorem~13.3]{Ehrenborg_Readdy_Tchebyshev_transform}.

Let $f$ be a homogeneous quasi-symmetric function such that
$f \cdot s^{j}$ is a quasi-symmetric function of type $B^{*}$.
Define the {\em stable principal specialization} of
the quasi-symmetric function $f \cdot s^{j}$
of type~$B^{*}$ to be
$\ps_{B^{*}}(f \cdot s^{j}) = q^{\deg(f)} \cdot \ps^{*}(f)$,
where $\ps^{*}(f) = \ps(f^{*})$.
This is the substitution $s=1$, $t_{k} = q$, $t_{k-1} = q^{2}$, \ldots
as $k$ tends to infinity,
since $f(\ldots, q^{3},q^{2},q) = q^{\deg(f)} \cdot f(\ldots,q^{2},q,1)$.
Especially, for a graded poset $P$ we have
\begin{equation}
       \ps_{B^{*}}(F_{B^{*}}(P))
     =
        \sum_{\hz \leq x < \ho} q^{\rho(x)} \cdot \ps^{*}(F([\hz,x])) . 
\label{equation_ps_star}
\end{equation}

\begin{theorem}
For a graded poset $P$ of rank $n+1$ the relationship
between the Major MacMahon map and
the stable principal specialization of type $B^{*}$ 
is given by
\begin{equation}
\Theta(\Psi(P))  =  (1-q)^{n} \cdot [n]! \cdot \ps_{B^{*}}(F_{B^{*}}(P^{*})) . 
\label{equation_B_ps_2}
\end{equation}
Especially, for a homogeneous $\ab$-polynomial $w$ of degree $n$
the Major MacMahon map is given by
\begin{equation}
\Theta(w)  =  (1-q)^{n} \cdot [n]! \cdot \ps_{B^{*}}(\gamma_{B^{*}}(w^{*})) . 
\label{equation_B_ps_1}
\end{equation}
\end{theorem}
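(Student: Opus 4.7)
The plan is to prove equation~\eqref{equation_B_ps_1} first, and then derive~\eqref{equation_B_ps_2} by applying~\eqref{equation_B_ps_1} with $w = \Psi(P)$ and invoking the two identities $\Psi(P)^{*} = \Psi(P^{*})$, which realizes poset duality as reversal of the $\ab$-index, together with $\gamma_{B^{*}}(\Psi(Q)) = F_{B^{*}}(Q)$ stated earlier in the section, applied at $Q = P^{*}$.

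For~\eqref{equation_B_ps_1} itself, by linearity it is enough to verify the identity on a basis of homogeneous $\ab$-polynomials of degree $n$. I would use the $v$-basis $\{v_{\beta}\}$ indexed by compositions $\beta = (\beta_{1}, \ldots, \beta_{\ell})$ of $n+1$, because on this basis every map involved has a transparent closed form: $\gamma(v_{\beta}) = M_{\beta}$, reversal acts by $v_{\beta}^{*} = v_{\beta^{*}}$, and matching $v_{\beta}$ against the template in the definition of $\gamma_{B^{*}}$ (with $\alpha = (\beta_{1}, \ldots, \beta_{\ell-1})$ and $p = \beta_{\ell}-1$) yields
$$
\gamma_{B^{*}}(v_{\beta}) = M_{(\beta_{1}, \ldots, \beta_{\ell-1})} \cdot s^{\beta_{\ell} - 1}.
$$
Combining these observations with the first theorem of this section, which evaluates $\Theta(v_{\beta}) = (1-q)^{n+1} \cdot [n+1]! \cdot \ps^{*}(M_{\beta})$, and with the definition of $\ps_{B^{*}}$ from Section~\ref{section_diamond}, equation~\eqref{equation_B_ps_1} evaluated at $w = v_{\beta}$ collapses to a single identity of the form
$$
(1 - q^{n+1}) \cdot \ps^{*}(M_{\beta}) = q^{e} \cdot \ps^{*}(M_{\beta'})
$$
between stable principal specializations of monomial quasi-symmetric functions, where $\beta'$ is obtained from $\beta$ by removing one extreme part and $e$ is the sum of the remaining parts.

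The heart of the argument, and the likely main technical obstacle, is verifying this reduced identity. My plan is to expand each side as a $q$-series via
$$
\ps(M_{\gamma}) = \sum_{0 \leq i_{1} < \cdots < i_{k}} q^{i_{1} \gamma_{1} + \cdots + i_{k} \gamma_{k}},
$$
and to match the two sides term by term. The factor $1-q^{n+1}$ on the left-hand side should be absorbed via the index shift $i_{j} \mapsto i_{j}+1$ applied to the $q^{n+1} \cdot \ps(M_{\beta^{*}})$ piece, leaving exactly the increasing sequences whose first index equals zero; a second reindexing of the remaining indices then identifies the leftover series with the right-hand side. Once this identity is established, the theorem follows on the basis $\{v_{\beta}\}$, hence for all homogeneous $\ab$-polynomials of degree $n$ by linearity, and~\eqref{equation_B_ps_2} drops out by the reduction described above.
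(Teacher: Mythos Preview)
Your reduction to the identity
\[
(1-q^{n+1}) \cdot \ps^{*}(M_{\beta}) \;=\; q^{e} \cdot \ps^{*}(M_{\beta'})
\]
is carried out correctly, but the identity itself is \emph{false}, so the ``second reindexing'' step cannot succeed. Concretely, take $\beta=(1,2)$, so $n=2$ and $w=v_{\beta}=\bv(\av-\bv)$. Then $\Theta(w)=q-q^{3}$, while the right-hand side of~\eqref{equation_B_ps_1} is
\[
(1-q)^{2}\,[2]!\cdot \ps_{B^{*}}\bigl(\gamma_{B^{*}}(v_{(2,1)})\bigr)
=(1-q)^{2}(1+q)\cdot q^{2}\,\ps^{*}(M_{(2)})
=(1-q)\,q^{2},
\]
which disagrees. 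Tracing your index-shift argument, after absorbing $1-q^{n+1}$ you are left with $\sum_{1\le i_{2}<\cdots<i_{\ell}} q^{i_{2}\beta_{\ell-1}+\cdots+i_{\ell}\beta_{1}}$, whereas the target $q^{n+1-\beta_{1}}\ps(M_{(\beta_{2},\ldots,\beta_{\ell})})$ has the parts $\beta_{2},\ldots,\beta_{\ell}$ in the \emph{opposite} order; no mere reindexing of the $i_{j}$ repairs this.

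The underlying issue is a misprint in the statement: the paper's own proof (combine the displayed chain of equalities with equation~\eqref{equation_ps_star}) actually establishes
\[
\Theta(\Psi(P)) = (1-q)^{n}\,[n]!\cdot \ps_{B^{*}}(F_{B^{*}}(P)),
\qquad
\Theta(w) = (1-q)^{n}\,[n]!\cdot \ps_{B^{*}}(\gamma_{B^{*}}(w)),
\]
with no reversal. For \emph{that} version your method goes through cleanly: the reduced identity becomes $(1-q^{|\gamma|})\,\ps(M_{\gamma})=q^{|\gamma|-\gamma_{1}}\,\ps(M_{(\gamma_{2},\ldots,\gamma_{k})})$ with $\gamma=\beta^{*}$, and your two reindexings (first $i_{j}\mapsto i_{j}+1$ to isolate $i_{1}=0$, then $j_{m}=i_{m+1}-1$) give exactly this. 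Compared with the paper's route --- a multichain computation of $\ps^{*}(F(P))$ that peels off the coatom and then invokes~\eqref{equation_ps_2} --- yours is a basiswise, purely $q$-series argument; it avoids the poset-level manipulation but requires tracking the $\gamma_{B^{*}}$/$\ps_{B^{*}}$ bookkeeping by hand. Both are short once the target statement is corrected.
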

\begin{proof}
For the poset $P$ we have
\begin{align*}
\ps^{*}(F(P))
& = 
\lim_{k \rightarrow \infty}
\sum_{\hz = x_{0} \leq x_{1} \leq \cdots \leq x_{k} = \ho}
            \left(q^{k-1}\right)^{\rho(x_{0},x_{1})}
               \cdots
            \left(q^{2}\right)^{\rho(x_{k-3},x_{k-2})}
               \cdot
            q^{\rho(x_{k-2},x_{k-1})}
               \cdot
            1^{\rho(x_{k-1},x_{k})} \\
& = 
\lim_{k \rightarrow \infty}
\sum_{\hz = x_{0} \leq x_{1} \leq \cdots \leq x_{k} = \ho}
            q^{\rho(x_{k-1})}
         \cdot
            \left(q^{k-2}\right)^{\rho(x_{0},x_{1})}
               \cdots
            q^{\rho(x_{k-3},x_{k-2})}
               \cdot
            1^{\rho(x_{k-2},x_{k-1})} \\
& = 
\sum_{\hz \leq x \leq \ho} 
            q^{\rho(x)}
         \cdot
\ps^{*}(F([\hz,x])) \\
& = 
\sum_{\hz \leq x < \ho} 
            q^{\rho(x)}
         \cdot
\ps^{*}(F([\hz,x])) 
+
            q^{n+1}
         \cdot
\ps^{*}(F(P)) .
\end{align*}
Rearranging terms yields
\begin{align*}
\sum_{\hz \leq x < \ho}  q^{\rho(x)} \cdot \ps^{*}(F([\hz,x])) 
  & = 
(1- q^{n+1}) \cdot  \ps^{*}(F(P)) \\
  & = 
(1 - q^{n+1}) \cdot  \ps(F(P^{*})) \\
  & = 
(1 - q^{n+1}) \cdot  
\frac{\Theta(\Psi(P))}{(1-q)^{n+1} \cdot [n+1]!} \\
  & = 
\frac{\Theta(\Psi(P))}{(1-q)^{n} \cdot [n]!} .
\end{align*}
Combining the last identity with~\eqref{equation_ps_star}
yields the desired result.
\end{proof}

\begin{theorem}
For two graded posets $P$ and $Q$
of ranks $m+1$, respectively $n+1$,
the identity holds:
\begin{equation}
\Theta(\Psi(P \diamond^{*} Q))
  =
\Gaussian{m+n}{n}
  \cdot
\Theta(\Psi(P))
  \cdot
\Theta(\Psi(Q)) .
\end{equation}
\end{theorem}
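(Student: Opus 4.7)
The plan is to imitate the proof of Theorem~\ref{theorem_Cartesian_product}, replacing the ordinary quasi-symmetric function and its stable principal specialization with their type-$B^{*}$ analogues. The previous theorem, via identity~\eqref{equation_B_ps_2}, translates $\Theta \circ \Psi$ into $\ps_{B^{*}} \circ F_{B^{*}}$ up to the explicit prefactor $(1-q)^{r} \cdot [r]!$, where $r$ is the degree of the $\ab$-index; and Theorem~13.3 of~\cite{Ehrenborg_Readdy_Tchebyshev_transform} supplies the multiplicativity $F_{B^{*}}(P \diamond^{*} Q) = F_{B^{*}}(P) \cdot F_{B^{*}}(Q)$ of the type-$B^{*}$ quasi-symmetric function under the dual diamond product.

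First, I would observe that the rank of $P \diamond^{*} Q$ equals $(m+1) + (n+1) - 1 = m+n+1$. Applying~\eqref{equation_B_ps_2} to each of $P$, $Q$ and $P \diamond^{*} Q$ in turn produces three identities relating $\Theta \circ \Psi$ to $\ps_{B^{*}} \circ F_{B^{*}}$, with prefactors $(1-q)^{m}[m]!$, $(1-q)^{n}[n]!$ and $(1-q)^{m+n}[m+n]!$ respectively. Next I would invoke the multiplicativity of $F_{B^{*}}$ under $\diamond^{*}$ together with the multiplicativity of $\ps_{B^{*}}$. The latter is inherited from the fact that $\ps$ is a ring homomorphism, the reversal involution $f \longmapsto f^{*}$ is a ring automorphism on the commutative ring $\QSym$, and the grading factor $q^{\deg(f)}$ implicit in $\ps_{B^{*}}$ is additive on products.

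Combining these facts in the manner of Theorem~\ref{theorem_Cartesian_product} then yields
\begin{align*}
\Theta(\Psi(P \diamond^{*} Q))
  &  =  (1-q)^{m+n} \cdot [m+n]! \cdot \ps_{B^{*}}(F_{B^{*}}(P)) \cdot \ps_{B^{*}}(F_{B^{*}}(Q)) \\
  &  =  \frac{[m+n]!}{[m]! \cdot [n]!} \cdot \Theta(\Psi(P)) \cdot \Theta(\Psi(Q)) \\
  &  =  \Gaussian{m+n}{n} \cdot \Theta(\Psi(P)) \cdot \Theta(\Psi(Q)) .
\end{align*}
No genuine obstacle is anticipated, as the argument is a structural transcription of the Cartesian product proof into the type-$B^{*}$ setting. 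The only bookkeeping point is to correctly track the dual-poset convention in~\eqref{equation_B_ps_2}: if $F_{B^{*}}(P^{*})$ rather than $F_{B^{*}}(P)$ is the operative form, one uses the identity $(P \diamond^{*} Q)^{*} = P^{*} \diamond Q^{*}$ in tandem with the corresponding multiplicativity statement transported to the diamond product on duals.
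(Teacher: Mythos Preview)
Your proposal is correct and follows essentially the same route as the paper: apply~\eqref{equation_B_ps_2} to $P$, $Q$, and $P \diamond^{*} Q$, invoke the multiplicativity of $F_{B^{*}}$ under $\diamond^{*}$ together with the multiplicativity of $\ps_{B^{*}}$, and collect the prefactors into the Gaussian binomial. The paper indeed uses the dual form $F_{B^{*}}(P^{*})$ from~\eqref{equation_B_ps_2} (writing $F_{B^{*}}(P^{*} \diamond^{*} Q^{*})$ directly in the first step), so your anticipated bookkeeping adjustment is exactly what is needed.
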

\begin{proof}
The proof is a direct verification as follows:
\begin{align*}
\Theta(\Psi(P \diamond^{*} Q))
   &  =
 (1-q)^{m+n} \cdot [m+n]! \cdot \ps_{B^{*}}(F_{B^{*}}(P^{*} \diamond^{*} Q^{*}))  \\
   &  =
 \Gaussian{m+n}{m} \cdot (1-q)^{m+n} \cdot [m]! \cdot [n]!
 \cdot \ps_{B^{*}}(F_{B^{*}}(P^{*})) \cdot \ps_{B^{*}}(F_{B^{*}}(Q^{*}))  \\
   &  =
 \Gaussian{m+n}{m} \cdot \Theta(\Psi(P))
 \cdot \Theta(\Psi(Q)) .
\qedhere
\end{align*}
\end{proof}

\section{Permutations}
\label{section_permutations}

One connection between permutations and posets is via
the concept of $R$-labelings. For more details,
see~\cite[Section~3.14]{EC1}.
Let ${\mathcal E}(P)$ be the set of all cover relations of $P$,
that is, 
${\mathcal E}(P) = \{(x,y) \in P^{2} \: : \: x \coveredby y\}$.
A graded poset $P$ has an {\em $R$-labeling} if there is a map
$\lambda : {\mathcal E}(P) \longrightarrow \Lambda$, where $\Lambda$
is a linearly ordered set, such that in every interval $[x,y]$
in $P$ there is a unique maximal chain
$c = \{x = x_{0} \coveredby x_{1} \coveredby \cdots \coveredby x_{k} = y\}$
such that
$\lambda(x_{0},x_{1}) \leq \lambda(x_{1},x_{2}) \leq \cdots 
\cdots \leq \lambda(x_{k-1},x_{k})$.

For a maximal chain $c$ in the poset $P$ of rank $n$,
let $\lambda(c)$ denote the list
$(\lambda(x_{0},x_{1})$, $\lambda(x_{1},x_{2})$, \ldots,
$\lambda(x_{k-1},x_{k}))$.
The {\em Jordan--H\"older set} of $P$, denoted by $\JH(P)$,
is the set of all the lists
$\lambda(c)$ where $c$ ranges over all maximal chains of $P$.
The descent set of a list of labels $\lambda(c)$
is the set of positions where there are descents in the list.
Similarly, we define the descent word of $\lambda(c)$
to be $u_{\lambda(c)} = u_{1} u_{2} \cdots u_{n-1}$
where $u_{i} = \bv$ if $\lambda(x_{i-1},x_{i}) > \lambda(x_{i},x_{i+1})$
and $u_{i} = \av$ otherwise.

The bridge between posets and permutations is given by
the next result.
\begin{theorem}
For an $R$-labeling $\lambda$ of a graded poset $P$ we have that
$$   \Psi(P) = \sum_{c} u_{\lambda(c)}  ,  $$
where the sum is over the Jordan--H\"older set $\JH(P)$.
\end{theorem}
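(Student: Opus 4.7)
The plan is to show that this classical identity follows from the well-known fact that, for an $R$-labeling, the flag $h$-vector entry $h_{S}$ equals the number of maximal chains whose descent set (under $\lambda$) is exactly $S$. Given this, the result is immediate from the expansion $\Psi(P) = \sum_{S} h_{S} \cdot u_{S}$, since the maximal chains with descent set $S$ are precisely those with descent word $u_{S}$.

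First I would establish the bridge between chains of a given type and maximal chains of bounded descent set. Fix a subset $T \subseteq \{1,2,\ldots,n-1\}$, where $n$ is the rank of $P$. Given a chain $c = \{\hz = x_{0} < x_{i_{1}} < \cdots < x_{i_{k}} < x_{n} = \ho\}$ with rank set $T = \{i_{1} < \cdots < i_{k}\}$, the defining property of the $R$-labeling guarantees that each subinterval $[x_{i_{j}}, x_{i_{j+1}}]$ contains a unique maximal chain with weakly increasing label sequence. Concatenating these unique increasing chains extends $c$ to a maximal chain $\widetilde{c}$ of $P$. Within each block the labels are weakly increasing, so all descents of $\lambda(\widetilde{c})$ must occur at the ``block boundaries,'' i.e.\ $\Des(\lambda(\widetilde{c})) \subseteq T$.

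Next I would observe that this construction is a bijection between chains of type $T$ and maximal chains with descent set contained in $T$. The inverse is to restrict a maximal chain to its elements at the ranks in $T$; the uniqueness clause in the definition of an $R$-labeling ensures that the extension procedure recovers the original maximal chain whenever $\Des(\lambda(c)) \subseteq T$. This yields
\begin{equation*}
    f_{T} \;=\; \#\{c \in \JH(P) \: : \: \Des(\lambda(c)) \subseteq T\}
           \;=\; \sum_{S \subseteq T} \beta_{S},
\end{equation*}
where $\beta_{S}$ denotes the number of maximal chains with $\Des(\lambda(c)) = S$. By inclusion-exclusion (the defining relation of the flag $h$-vector) we conclude $h_{T} = \beta_{T}$.

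Finally, substituting into the $u$-basis expansion of the $\ab$-index gives
\begin{equation*}
    \Psi(P) \;=\; \sum_{S} h_{S} \cdot u_{S}
            \;=\; \sum_{S} \beta_{S} \cdot u_{S}
            \;=\; \sum_{c \in \JH(P)} u_{\lambda(c)},
\end{equation*}
as desired. The only real content is the bijection between chains of type $T$ and maximal chains with $\Des(\lambda(c)) \subseteq T$, so I would expect the main (minor) obstacle to be stating cleanly the extension/restriction map and verifying that the uniqueness in the $R$-labeling definition makes it a bijection; everything else is bookkeeping against the definition of $h_{S}$.
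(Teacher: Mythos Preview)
Your argument is correct and is precisely the standard proof: establish the bijection between chains of rank-set $T$ and maximal chains with descent set contained in $T$, deduce $f_T = \sum_{S\subseteq T}\beta_S$, invert to get $h_T=\beta_T$, and read off the $\ab$-index from its $u$-basis expansion.

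Note, however, that the paper does not supply its own proof of this theorem at all: immediately after the statement it simply records that this is a reformulation of a result of Bj\"orner and Stanley (Theorem~2.7 in~\cite{Bjorner}), with the $\ab$-index reformulation appearing as Lemma~3.1 in~\cite{Ehrenborg_Readdy_r_cubical}. So there is nothing to compare against; what you have written is exactly the argument underlying those cited results. One small notational slip: in your final display you write $c\in\JH(P)$ and then $u_{\lambda(c)}$, but in the paper $\JH(P)$ is already the set of label \emph{sequences} $\lambda(c)$, not the set of maximal chains, so the summand should simply be $u_{c}$ (or else index the sum by maximal chains $c$).
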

This is a reformulation of a result of
Bj\"orner and Stanley~\cite[Theorem~2.7]{Bjorner}.
The reformulation can be found
in~\cite[Lemma~3.1]{Ehrenborg_Readdy_r_cubical}.

As a corollary we obtain MacMahon's classical result on the major index
on a multiset; see~\cite{MacMahon_1}.
For a composition $\alpha$ of $n$ let
$\SSSS_{\alpha}$
denote all the permutations of the multiset
$\{1^{\alpha_{1}}, 2^{\alpha_{2}}, \ldots, k^{\alpha_{k}}\}$.
\begin{corollary}[MacMahon]
For a composition $\alpha = (\alpha_{1}, \alpha_{2}, \ldots, \alpha_{k})$
of $n$ the following identity holds:
$$   \sum_{\pi \in \SSSS_{\alpha}}   q^{\maj(\pi)}
    =
       \frac{[n]!}{[\alpha_{1}]! \cdot [\alpha_{2}]! \cdots [\alpha_{k}]!} .  $$
\label{corollary_permutation}
\end{corollary}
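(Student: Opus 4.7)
The plan is to apply the $R$-labeling theorem stated just above, together with the Cartesian product identity of Theorem~\ref{theorem_Cartesian_product}, to the poset $P = C_{\alpha_{1}} \times C_{\alpha_{2}} \times \cdots \times C_{\alpha_{k}}$, where $C_{m}$ denotes the chain of rank $m$ (that is, the totally ordered poset with $m+1$ elements). This poset has rank $n = \alpha_{1} + \cdots + \alpha_{k}$, matching the degree of the claimed identity.

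First I would record the trivial input data: a chain has no descents, so $\Psi(C_{m}) = \av^{m-1}$ and hence $\Theta(\Psi(C_{m})) = 1$. Then I would equip $P$ with the $R$-labeling $\lambda$ that assigns to each cover relation the index $i \in \{1, 2, \ldots, k\}$ of the coordinate being incremented, ordered naturally. In any interval $[(a_{1},\ldots,a_{k}),(b_{1},\ldots,b_{k})]$ the unique weakly increasing maximal chain is the one that first performs all increments in coordinate $1$, then all those in coordinate $2$, and so on, which confirms $\lambda$ is an $R$-labeling. Reading off the labels of a maximal chain of $P$ produces a word of length $n$ in the alphabet $\{1, \ldots, k\}$ using letter $i$ exactly $\alpha_{i}$ times, i.e., a multiset permutation in $\SSSS_{\alpha}$. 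Moreover the descent word of this label sequence coincides with the descent word of the corresponding multiset permutation.

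By the Björner--Stanley result displayed above, it follows that $\Psi(P) = \sum_{\pi \in \SSSS_{\alpha}} u_{\pi}$, where $u_{\pi}$ encodes $\Des(\pi)$. Applying the Major MacMahon map and using that $\Theta(u_{\pi}) = q^{\maj(\pi)}$, which is immediate from the definitions of $\Theta$ and $\maj$, we obtain
$$\Theta(\Psi(P)) = \sum_{\pi \in \SSSS_{\alpha}} q^{\maj(\pi)}.$$
On the other hand, iterating Theorem~\ref{theorem_Cartesian_product} a total of $k-1$ times, the successive $q$-binomial coefficients telescope into the $q$-multinomial coefficient $[n]!/([\alpha_{1}]! \cdots [\alpha_{k}]!)$, while each factor $\Theta(\Psi(C_{\alpha_{i}}))$ equals $1$. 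Comparing the two evaluations of $\Theta(\Psi(P))$ yields the corollary.

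The main obstacle is the careful verification that the coordinate-index labeling is an $R$-labeling and that descent conventions align between the chain in $P$ and the associated multiset permutation; both are routine bookkeeping and require no new ideas beyond the tools already developed in the paper.
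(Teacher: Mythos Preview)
Your proposal is correct and follows essentially the same approach as the paper: both take the product of chains $C_{\alpha_{1}} \times \cdots \times C_{\alpha_{k}}$, equip it with the coordinate-index $R$-labeling, identify the Jordan--H\"older set with $\SSSS_{\alpha}$ via the Bj\"orner--Stanley theorem, and then compute $\Theta(\Psi(P))$ two ways using $\Theta(\Psi(C_{m}))=1$ and iterated application of Theorem~\ref{theorem_Cartesian_product}. Your write-up is slightly more explicit about verifying the $R$-labeling and the descent-word alignment, but there is no substantive difference.
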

\begin{proof}
Let $P_{i}$ denote the chain of rank $\alpha_{i}$
for $i=1, \ldots, k$.
Furthermore, label all the cover relations in $P_{i}$ with~$i$.
Let~$L$ denote the distributive lattice
$P_{1} \times P_{2} \times \cdots \times P_{k}$.
Furthermore, let $L$ inherit an $R$-labeling from its factors,
that is, if
$x=(x_{1},x_{2}, \ldots, x_{k}) \coveredby (y_{1}, y_{2}, \ldots, y_{k})=y$
let the label $\lambda(x,y)$
be the unique coordinate~$i$ such that $x_{i} \coveredby y_{i}$.
Observe that the Jordan--H\"older set of $L$ is $\SSSS_{\alpha}$.
Direct computation yields $\Psi(P_{i}) = \av^{\alpha_{i}-1}$,
so the Major MacMahon map is $\Theta(\Psi(P_{i})) = 1$.
Iterating Theorem~\ref{theorem_Cartesian_product}
evaluates the Major MacMahon map on $L$:
\begin{align*}
     \sum_{\pi \in \SSSS_{\alpha}} q^{\maj(\pi)}
    & =
       \Theta\left(
       \sum_{\pi \in \SSSS_{\alpha}} u(\pi)
       \right)
    =
       \Theta\left(\Psi(L)\right)
    =
       \Gaussian{n}{\alpha}    . 
\qedhere
\end{align*}
\end{proof}

\begin{figure}
\setlength{\unitlength}{0.7 mm}
\begin{center}
\begin{picture}(80,40)(0,0)

\newcommand{\dddddot}{\circle*{2.0}}
\put(40,0)\dddddot
\put(0,20)\dddddot
\put(20,20)\dddddot
\put(80,20)\dddddot
\put(40,40)\dddddot

\newcommand{\ddddddot}{\circle*{1.0}}
\put(45,20)\ddddddot
\put(50,20)\ddddddot
\put(55,20)\ddddddot

\linethickness{0.30mm}
\qbezier(40,0)(20,10)(0,20)
\qbezier(40,0)(30,10)(20,20)
\qbezier(40,0)(60,10)(80,20)
\qbezier(40,40)(20,30)(0,20)
\qbezier(40,40)(30,30)(20,20)
\qbezier(40,40)(60,30)(80,20)

\put(7,6){\footnotesize $(-1,i)$}
\put(32,9){\footnotesize $(2,i)$}
\put(60,6){\footnotesize $(r_{i},i)$}
\put(14,29){\footnotesize $0$}
\put(25,29){\footnotesize $0$}
\put(66,29){\footnotesize $0$}

\end{picture}
\end{center}
\caption{The poset $P_{i}$ with its $R$-labeling
used in the proof of Corollary~\ref{corollary_signed_permutation}.}
\label{figure_one}
\end{figure}
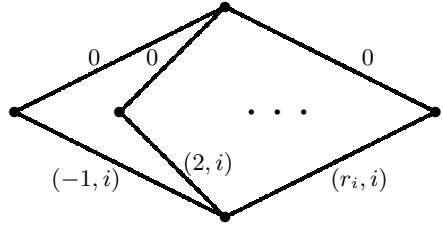

For a vector $\rv = (r_{1}, r_{2}, \ldots, r_{n})$
of positive integers
let an $\rv$-signed permutation be
a list
$\sigma = (\sigma_{1}, \sigma_{2}, \ldots, \sigma_{n+1}) = 
((j_{1},\pi_{1}),$ $(j_{2},\pi_{2}),$ $\ldots,$ $(j_{n},\pi_{n}),$ $0)$
such that
$\pi_{1} \pi_{2} \cdots \pi_{n}$
is a permutation
in the symmetric group $\SSSS_{n}$
and the sign $j_{i}$ is
from the set $S_{\pi_{i}} = \{-1\} \cup \{2, \ldots, r_{\pi_{i}}\}$.
On the set of labels
$\Lambda
  =
\{ (j,i) \: : \: 1 \leq i \leq n, j  \in S_{i} \} \cup \{0\}$
we use the lexicographic order with the
extra condition that $0 < (j,i)$ if and only if $0 < j$.
Denote the set of $\rv$-signed permutations
by $\SSSS_{n}^{\rv}$.
The descent set of an $\rv$-signed permutation~$\sigma$ is the set
$\Des(\sigma) = \{i \: : \: \sigma_{i} > \sigma_{i+1}\}$
and the major index is defined as
$\maj(\sigma) = \sum_{i \in \Des(\sigma)} i$.
Similar to Corollary~\ref{corollary_permutation},
we have the following result.
\begin{corollary}
The distribution of the major index for $\rv$-signed permutations
is given by
$$   \sum_{\sigma \in \SSSS^{\rv}_{n}} q^{\maj(\sigma)}
     =
        [n]!
          \cdot
        \prod_{i=1}^{n} (1 + (r_{i}-1) \cdot q)   .  $$
\label{corollary_signed_permutation}
\end{corollary}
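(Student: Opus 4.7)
The plan is to generalize the proof of Corollary~\ref{corollary_permutation} by constructing a poset whose Jordan--Hölder set under a suitable $R$-labeling is $\SSSS^{\rv}_n$, and then applying the $R$-labeling theorem together with the dual diamond product formula. For each $i = 1, \ldots, n$, let $P_i$ be the rank $2$ poset of Figure~\ref{figure_one}: it has $r_i$ atoms, the cover relations from $\hz$ to the atoms are labeled $(-1,i), (2,i), \ldots, (r_i,i)$, and every cover relation from an atom to $\ho$ is labeled $0$. Under the linear order on $\Lambda$ defined before the statement, the unique weakly increasing maximal chain in $[\hz,\ho]$ passes through the atom labeled $(-1,i)$, since $(-1,i) < 0 < (j,i)$ for $j \geq 2$, so $P_i$ is indeed $R$-labeled. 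A direct count gives flag $h$-vector $(1, r_i-1)$, so $\Psi(P_i) = \av + (r_i - 1) \cdot \bv$, whence
\begin{equation*}
\Theta(\Psi(P_i)) = 1 + (r_i - 1) \cdot q.
\end{equation*}

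Next, set $L = P_1 \diamond^{*} P_2 \diamond^{*} \cdots \diamond^{*} P_n$ with the labeling inherited from its factors, where each cover relation into the new top $\ho$ is again labeled $0$. A maximal chain of $L$ activates the $n$ coordinates of the Cartesian product in some order $\pi_1, \pi_2, \ldots, \pi_n$, each time selecting an atom $j_k \in S_{\pi_k}$ of $P_{\pi_k}$, and finally ascends to $\ho$; thus its label sequence is exactly $((j_1,\pi_1), (j_2,\pi_2), \ldots, (j_n,\pi_n), 0)$, giving a bijection between $\JH(L)$ and $\SSSS^{\rv}_n$. A case analysis in each interval of $L$ (sort the coordinates to be activated by the lex order of their $(j,i)$ labels, and force $j = -1$ whenever the interval reaches $\ho$) confirms the inherited labeling is an $R$-labeling. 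The descent positions of the label sequence coincide with $\Des(\sigma)$ by construction of the order on $\Lambda$, so $\Theta$ sends the descent word of the chain to $q^{\maj(\sigma)}$. Applying the Björner--Stanley identity to $L$ and then $\Theta$ yields
\begin{equation*}
\Theta(\Psi(L)) = \sum_{\sigma \in \SSSS^{\rv}_n} q^{\maj(\sigma)}.
\end{equation*}

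To close the proof, I iterate the dual diamond product theorem $n-1$ times over the rank-$2$ posets $P_1, \ldots, P_n$:
\begin{equation*}
\Theta(\Psi(L)) = \Gaussian{n}{1} \cdot \Gaussian{n-1}{1} \cdots \Gaussian{2}{1} \cdot \prod_{i=1}^{n} \Theta(\Psi(P_i)) = [n]! \cdot \prod_{i=1}^{n} (1 + (r_i-1) \cdot q),
\end{equation*}
which matches the claimed identity. The main obstacle will be the careful bookkeeping around the trailing $0$: one must verify that the cover relations into the new top of $L$ are all labeled $0$, that this $0$ corresponds to the trailing $0$ in $\sigma \in \SSSS^{\rv}_n$, and that descent at position $n$ in the chain (coming from the pair $((j_n,\pi_n), 0)$) matches $j_n > 0$ exactly as required by the paper's definition of $\Des(\sigma)$.
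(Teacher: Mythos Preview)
Your proof is correct and follows essentially the same approach as the paper's: construct the rank-$2$ posets $P_i$ of Figure~\ref{figure_one}, form $L = P_1 \diamond^{*} \cdots \diamond^{*} P_n$ with the inherited $R$-labeling (labeling the cover relations into $\ho$ by $0$), identify $\JH(L)$ with $\SSSS^{\rv}_n$, and then iterate the dual diamond product theorem. You have merely spelled out the details that the paper leaves implicit, including the Gaussian-binomial bookkeeping and the verification that the trailing $0$ behaves correctly.
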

\begin{proof}
The proof is the same as Corollary~\ref{corollary_permutation}
except we replace the chains with the posets $P_{i}$
in Figure~\ref{figure_one}.
Note that $\Psi(P_{i}) = \av + (r_{i} - 1) \cdot \bv$.
Let $L$ be the lattice
$L = P_{1} \diamond^{*} P_{2} \diamond^{*} \cdots \diamond^{*} P_{n}$.
Let~$L$ inherit the labels of the cover relations from its factors
with the extra condition that the cover relations attached to
the maximal element receive the label $0$.
This is an $R$-labeling and the labels of the maximal chains
are exactly the $\rv$-signed permutations.
\end{proof}

For signed permutations, that is,
$\rv = (2,2, \ldots, 2)$, the above result follows
from an identity due to Reiner~\cite[Equation~(5)]{Reiner}.

\section{Concluding remarks}

We suggest the following $q,t$-extension of
the Major MacMahon map $\Theta$.
Define
$\Theta^{q,t} : \zab \longrightarrow \Zzz[q,t]$
by
\begin{equation}
\Theta^{q,t}(w) 
       =
\Theta(w) \cdot  {w \vrule_{\av = 1, \bv = t}}
       =
\prod_{i \: : \: u_{i} = \bv} q^{i} \cdot t    ,
\label{equation_q_t}
\end{equation}
for an $\ab$-monomial $w = u_{1} u_{2} \cdots u_{n}$.
Applying this map to the $\ab$-index
of the Boolean algebra yields
one of the four types of $q$-Eulerian polynomials:
$$
   \Theta^{q,t}(\Psi(B_{n}))
   =
    A_{n}^{\maj,\des}(q,t)
   =
     \sum_{\pi \in \SSSS_{n}} q^{\maj(\pi)} t^{\des(\pi)}   .  $$
The following identity has been attributed
to Carlitz~\cite{Carlitz}, but goes back to
MacMahon~\cite[Volume~2, Chapter IV, \S 462]{MacMahon_book},
\begin{equation}
\sum_{k \geq 0}
[k+1]^{n} \cdot t^{k}
  =
\frac{A_{n}^{\maj,\des}(q,t)}
{\prod_{j=0}^{n} (1 - t \cdot q^{j})} .
\label{equation_Carlitz_MacMahon}
\end{equation}
For recent work on the $q$-Eulerian polynomials,
see 
Shareshian and Wachs~\cite{Shareshian_Wachs}.
It is natural to ask if there is a poset approach
to identity~\eqref{equation_Carlitz_MacMahon}.

In the second half of Section~\ref{section_permutations},
before Corollary~\ref{corollary_signed_permutation},
we offer one way to define a major index for signed
permutations.
However, 
there are several different ways to extend
the major index to signed permutations.
Two of our favorites are~\cite{Adin_Roichman,Steingrimsson}.

\section*{Acknowledgements}

The authors thank 
the referee for his careful comments.
The first author was partially supported by
National Security Agency grant~H98230-13-1-0280.
This work was partially supported by a grant from the Simons Foundation (\#206001 to Margaret~Readdy).

\newcommand{\journal}[6]{{\sc #1,} #2, {\it #3} {\bf #4} (#5), #6.}
\newcommand{\preprint}[3]{{\sc #1,} #2, preprint #3.}
\newcommand{\book}[4]{{\sc #1,} #2, #3, #4.}

\end{document}